\newtheorem{theorem}{Theorem}[section]
\newtheorem{lemma}[theorem]{Lemma}
\newtheorem{proposition}[theorem]{Proposition}
\theoremstyle{definition}
\newtheorem{definition}[theorem]{Definition}
\newtheorem{example}[theorem]{Example}
\theoremstyle{remark}
\newcommand{\bbN}{\mathbbm{N}}
\newcommand{\bbR}{\mathbbm{R}}
\newcommand{\bbZ}{\mathbbm{Z}}
\newcommand{\bbk}{\mathbbm{k}}
\newcommand{\GL}{\operatorname{GL}}
\begin{document} 

\title{Counterexamples for Cohen--Macaulayness of Lattice Ideals}

\author{Laura Felicia Matusevich}
\address{Mathematics Department\\Texas A\&M University\\College Station, TX 77843}
\email{laura@math.tamu.edu}
\author{Aleksandra Sobieska}
\email{ola@math.tamu.edu}

\thanks{The authors were partially supported by NSF grant DMS 1500832.}


\begin{abstract}
Let $\mathscr{L}\subset \bbZ^n$ be a lattice, $I$ its corresponding
lattice ideal, and $J$ the toric ideal arising from the saturation of
$\mathscr{L}$. We produce infinitely many examples, in every
codimension, of pairs $I,J$ where one of these ideals is
Cohen--Macaulay but the other is not.
\end{abstract}

\parindent 0pt
\parskip .7em

\maketitle 

\section{Introduction} 

Let $n>2$ be an integer, and let $\bbk[x]= \bbk[x_1,\dots,x_n]$ be the polynomial
ring in $n$ commuting indeterminates over a field $\bbk$. 
In this article,  $\bbN = \{0,1,2, \dots \}$. 
%
We introduce our main object of study.

\begin{definition}
\label{def:latticeIdeal}
Let $\mathscr{L} \subset \bbZ^n$ be a lattice, that is, a subgroup of $\bbZ^n$.
The \emph{lattice ideal} corresponding to $\mathscr{L}$ is 
\begin{align*}
  I_{\mathscr{L}} := \langle x^a - x^b \, | \, a,b \in \bbN^n \text{ and }
  a-b \in \mathscr{L} \rangle \subset \bbk[x].
\end{align*}
\end{definition}

For $u \in \bbZ^n$, write $u=u_+-u_-$, where $u_+,u_-\in \bbN^n$ are
defined via 
$(u_+)_i =  \left\{ 
\begin{array}{ll}
u_i  & u_i \geq 0 \\ 
0 & u_i < 0
\end{array}
\right.$. 
It is not hard to check that 
\begin{align*}
  I_{\mathscr{L}} = \langle x^{u_+} - x^{u_-} \, | \, u \in \mathscr{L} \rangle.
\end{align*}

For $B \in \bbZ^{n\times m}$ of full rank
$m$, we set $\bbZ B$ to be the sublattice of $\bbZ^n$ generated over
$\bbZ$ by the columns of $B$. By construction, $\bbZ B$ is a rank $m$
lattice. Without loss of generality, we may assume that $B$ contains
no zero rows, for if a row of $B$ is zero, the lattice $\bbZ B$ can be
naturally embedded in $\bbZ^{n-1}$.

We assume throughout that we work with a \emph{positive} lattice
$\mathscr{L}$, meaning that $\mathscr{L} \cap \bbN^n = \{ 0 \}$.  
This positivity condition ensures that $I_{\mathscr{L}}$ is homogeneous
with respect to some $\bbZ$-grading of $\bbk[x]$ for which the values
$\deg(x_i)$, $i \in [n]:=\{1,\dots,n\}$, are positive integers.
Note that for a positive lattice $\bbZ B$, 
the matrix $B$ must be \emph{mixed}, meaning that every column contains a
strictly positive and a strictly negative entry. 
By~\cite[Corollary~2.2]{esBinomialIdeals}, the \emph{codimension} of
$I_{\mathscr{L}}$ equals the rank of $\mathscr{L}$.

The \emph{saturation} of a lattice $\mathscr{L}\subset \bbZ^n$ is 
  $\mathscr{L}^{\textrm{sat}}:= \{ u \in \bbZ^n \, | \, m \cdot u \in \mathscr{L} \text{ for some } m \in
  \bbN \}$.
We say that $\mathscr{L}$ is \emph{saturated} if $\mathscr{L} = \mathscr{L}^{\textrm{sat}}$. 
By~\cite[Corollary~2.2]{esBinomialIdeals}, if $\bbk$ is algebraically closed, then 
$I_{\mathscr{L}}$ is prime if and only if $\mathscr{L}$ is saturated. (Lattice
ideals corresponding to saturated lattices are always prime, it is the
converse of this assertion that requires the base field assumption.) Lattice ideals
corresponding to saturated lattices are known as~\emph{toric ideals}.

We remark that a lattice $\mathscr{L}$ is positive if and only if
$\mathscr{L}^{\textrm{sat}}$ is positive.

There is a strong relationship between the lattice ideals $I_{\mathscr{L}}$
and $I_{\mathscr{L}^{\textrm{sat}}}$, namely, $I_{\mathscr{L}^{\textrm{sat}}}$
is a minimal prime $I_{\mathscr{L}}$. Furthermore, if $\bbk$ is algebraically
closed, every associated prime of $I_{\mathscr{L}}$ is isomorphic to
$I_{\mathscr{L}^{\textrm{sat}}}$ by a rescaling of the
variables~\cite[Corollary~2.2]{esBinomialIdeals}. On the other hand,
toric ideals are better understood than lattice ideals. For instance,
there is a combinatorial/topological criterion to decide when a quotient by a toric ideal is
Cohen--Macaulay~\cite{thCohenMacaulay}, but there is no such
characterization of Cohen--Macaulayness for lattice ideals currently
available in the literature, beyond certain special
cases~\cite{pssyzygies,psgeneric,sscarf,eto}. 
At the root of most of these results is a topological method for
computing the graded Betti numbers of a lattice ideal as the ranks of
the homology groups of certain simplicial complexes (see
Lemma~\ref{pslemma}); however, these simplicial
complexes are not easily controlled in general.

In this article, we construct, for each codimension  $m\geq 2$,
infinitely many matrices $B$ for which one of $I_{\bbZ B}$, $I_{\bbZ
  B^{\textrm{sat}}}$ is Cohen--Macaulay (in fact, a complete intersection), but the
other one is not. This means that the most obvious place to look for a
Cohen--Macaulayness criterion for lattice ideals, namely the
associated toric ideals, does not directly yield positive results.


\section{Lattice Ideals in Codimension 2} 
\label{sec:Codim2}

In this section we study lattice ideals in codimension $2$. We recall
results from~\cite{pssyzygies} that characterize when such ideals are
(not) Cohen--Macaulay, and use them to construct examples.

Let $\mathscr{L} \subset \bbZ^n$ be a rank $m$ lattice, and let
$B=[b_{ij}]$ be an integer $n\times m$ matrix whose columns are a $\bbZ$-basis of $\mathscr{L}$.

Recall that an integer matrix is \emph{mixed} if every column contains a strictly
positive and a strictly negative entry. An integer matrix is \emph{dominating}
if it contains no square mixed submatrices. We emphasize that matrices
$B$ corresponding to $\bbZ$-generators of positive lattices are mixed.

\begin{theorem}{\cite[Theorem~2.9]{fsmixedbinoms}}
\label{thm:mixedDominating}
Let $\mathscr{L} \subset \bbZ^n$ be a rank $m$ lattice.  
The lattice ideal $I_{\mathscr{L}}$ is a complete intersection if and
only if
$\mathscr{L} = \bbZ B$ for some dominating matrix $B$.
In this case, $I_{\mathscr{L}}=\langle x^{u_+}-x^{u_-} \mid u \text{ is a
  column of } B\rangle$.
\end{theorem}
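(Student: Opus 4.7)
The plan is to prove both directions of the equivalence by induction on the codimension $m$, using the combinatorial structure of dominating matrices to reduce to smaller cases.

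For the forward direction, suppose $B$ is a mixed dominating $n\times m$ matrix whose columns $u_1,\ldots,u_m$ generate $\mathscr{L}$. I would first establish a combinatorial lemma: after reindexing rows and columns, some variable $x_n$ appears in exactly one column of $B$, say $u_m$. This should follow from the dominating hypothesis, since if the supports of the columns overlapped too much one could extract a square mixed submatrix from $B$. With this in hand, the binomial $g_m := x^{(u_m)_+} - x^{(u_m)_-}$ is the only one of the $g_k := x^{(u_k)_+} - x^{(u_k)_-}$ involving $x_n$. The matrix $B'$ obtained from $B$ by deleting the $n$-th row and $m$-th column remains mixed and dominating, and its columns generate a rank-$(m-1)$ positive lattice $\mathscr{L}' \subset \bbZ^{n-1}$. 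By the inductive hypothesis, $I_{\mathscr{L}'} = \langle g_1,\ldots,g_{m-1}\rangle$ is a complete intersection, and since $g_m$ is monic in $x_n$ modulo this ideal it is a non-zero-divisor, so $g_1,\ldots,g_m$ form a regular sequence in $\bbk[x]$. This shows $\langle g_1,\ldots,g_m\rangle$ has codimension $m$ and is contained in $I_\mathscr{L}$; since $I_\mathscr{L}$ also has codimension $m$, one concludes by checking that the ideal on the left is already a lattice ideal for $\bbZ B = \mathscr{L}$.

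For the backward direction, suppose $I_\mathscr{L}$ is a complete intersection generated by binomials $f_i = x^{a_i} - x^{b_i}$, $i \in [m]$. Set $v_i := a_i - b_i \in \mathscr{L}$ and $\mathscr{L}' := \bbZ\{v_1,\ldots,v_m\}$. Each $f_i$ lies in $I_{\mathscr{L}'}$, so $I_\mathscr{L} \subseteq I_{\mathscr{L}'}$; the reverse inclusion follows from $\mathscr{L}' \subseteq \mathscr{L}$. Hence $I_\mathscr{L} = I_{\mathscr{L}'}$, and since a positive lattice is recovered from its ideal via $\mathscr{L} = \{a-b \mid a,b \in \bbN^n,\ x^a - x^b \in I_\mathscr{L}\}$, one concludes $\mathscr{L}' = \mathscr{L}$. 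Thus the matrix $B$ with columns $v_1,\ldots,v_m$ is a $\bbZ$-basis for $\mathscr{L}$. To show $B$ must be dominating, I would argue the contrapositive: a mixed $k \times k$ submatrix of $B$ would yield a nontrivial syzygy among the corresponding binomials $f_{j_1},\ldots,f_{j_k}$, built from the rows of the adjugate of that submatrix and the mixed sign pattern, contradicting regularity.

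The main obstacles are the two combinatorial arguments: extracting a singleton-column variable from a mixed dominating matrix, and exhibiting explicitly a syzygy from a mixed square submatrix. Both rest on delicate sign and support bookkeeping for integer matrices and form the combinatorial heart of the theorem.
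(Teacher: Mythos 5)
This theorem is not proved in the paper at all: it is quoted from Fischer and Shapiro, so there is no internal argument to compare yours with, and your outline must stand on its own. It does not, for two reasons.

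The decisive problem is in your backward direction. You propose to derive a contradiction from a mixed $k\times k$ submatrix by producing ``a nontrivial syzygy \ldots contradicting regularity,'' but that implication is false, and no repair along those lines is possible: the $m$ chosen binomial generators of the codimension-$m$ ideal $I_{\mathscr{L}}$ in a polynomial ring are automatically a regular sequence, so nothing can contradict regularity. Concretely, $f_1=x_1x_2-x_3x_4$ and $f_2=x_1x_3-x_2x_4$ form a regular sequence, yet the matrix with columns $(1,1,-1,-1)$ and $(1,-1,1,-1)$ has mixed $2\times 2$ submatrices (rows $1,4$, or rows $2,3$). What a mixed square submatrix actually obstructs is not regularity but the equality $\langle f_1,\dots,f_m\rangle=I_{\mathscr{L}}$, i.e.\ saturation of the generated ideal with respect to $x_1\cdots x_n$: in the example, $x_2f_2-x_3f_1=x_4(x_3^2-x_2^2)$, so $x_3^2-x_2^2$ lies in the lattice ideal of the lattice spanned by the exponent vectors (which by Theorem~\ref{thm:codim2CM} is not even Cohen--Macaulay) but not in $\langle f_1,f_2\rangle$. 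Your sketch never invokes the hypothesis that the $f_i$ generate all of $I_{\mathscr{L}}$ in this direction, so it cannot reach a contradiction; that hypothesis is exactly what must be played against the mixed submatrix.

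The forward direction has a matching gap at its last step: from $\langle g_1,\dots,g_m\rangle\subseteq I_{\bbZ B}$ and equality of codimensions you cannot conclude equality (compare $(x^2)\subset(x)$), and ``checking that the ideal on the left is already a lattice ideal'' is precisely the substantive half of the theorem being deferred; it is where the dominating hypothesis must be used a second time, for instance by strengthening your induction to show that every variable is a nonzerodivisor modulo $\langle g_1,\dots,g_m\rangle$. In addition, the combinatorial lemma you assume (a mixed dominating matrix has a row with exactly one nonzero entry) is the real engine of the known proof and is left unproved; and two smaller points need patching: $g_m$ is not monic in $x_n$ --- its leading coefficient in $x_n$ is a monomial, so you need that monomials are nonzerodivisors modulo $I_{\mathscr{L}'}$ --- and the reduction of a complete intersection lattice ideal to exactly $m$ binomial generators with coprime terms requires a (standard) graded Nakayama argument. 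In sum, the forward half is a plausible skeleton with its hardest steps postponed, but the converse as proposed would fail.
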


If $\mathscr{L} = \bbZ B$, use $b_i$ to denote the $i$th row of $B$. 
The collection $G_B:=\{b_1, \ldots, b_n \} \subseteq \bbZ^m$ is called
a \emph{Gale diagram} of $\mathscr{L}$.
Any $\bbZ$-basis for $\mathscr{L}$ yields a Gale diagram,
which means that Gale diagrams are unique up to the action of
$\GL_m(\bbZ)$. This elementary combinatorial object gives some insight
to the nature of $I_{\mathscr{L}}$: in the codimension $2$ case, Gale
diagrams can be used to restate Theorem~\ref{thm:mixedDominating} and
also to give a characterization for when a lattice ideal is Cohen--Macaulay.

We say that a Gale diagram  $G_B$ is \textit{imbalanced} if $b_{i1} = 0$ or $b_{i2} \geq 0$
for each $i = 1, \ldots, n$. 

\begin{theorem}{\cite[Lemma~3.1,~Proposition~4.1]{pssyzygies}}
\label{thm:codim2CM}
Let $\mathscr{L} \subset \bbZ^n$ be a rank $2$ lattice.  
\begin{enumerate}
\item
The lattice ideal $I_{\mathscr{L}}$ is a complete intersection if and only if
$\mathscr{L}$ has an imbalanced Gale diagram $G_B$. 
In this case, $I_{\mathscr{L}}=\langle x^{u_+}-x^{u_-} \mid u \text{ is a
  column of } B\rangle$.
\item
The lattice ideal $I_{\mathscr{L}}$ is \textbf{not} Cohen--Macaulay if
and only if it has a Gale diagram $G_B$ which intersects each of the four open quadrants of $\bbR^2$. 
\end{enumerate}
\end{theorem}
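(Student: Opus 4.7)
The plan is to deduce both parts from two pieces of existing machinery: Theorem~\ref{thm:mixedDominating}, which gives the combinatorial criterion for complete intersection, and the topological Betti-number formula for lattice ideals referenced above as Lemma~\ref{pslemma}, which expresses graded Betti numbers of $I_{\mathscr{L}}$ as reduced homology of certain simplicial complexes indexed by multidegrees.

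For part (1), the plan is to show that a codimension two lattice $\mathscr{L}$ admits a dominating Gale diagram if and only if it admits an imbalanced one, and then invoke Theorem~\ref{thm:mixedDominating}. The easy direction is direct: if $B$ is imbalanced, then for any two rows $b_i, b_j$, either one of them lies on the vertical axis, in which case the first column of the $2 \times 2$ submatrix they span contains a $0$, or both lie in the closed upper half plane, in which case the second column of the submatrix contains no strictly negative entry. In either case the submatrix is not mixed, so $B$ is dominating. For the converse, I would run a case analysis on the sign patterns in $\{-,0,+\}^2$ realized by the rows of a dominating $B$. The only obstruction to being dominating is a pair of rows strictly in diagonally opposite open quadrants (i.e., a $(+,+)$ row together with a $(-,-)$ row, or a $(+,-)$ row with a $(-,+)$ row), so the off-axis rows of a dominating $B$ are confined to a single closed half plane. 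A suitable $M \in \GL_2(\bbZ)$ (a $90^\circ$ rotation or reflection) then transforms $B$ so that this half plane is the upper one, yielding an imbalanced diagram. Theorem~\ref{thm:mixedDominating} then gives both the characterization and the explicit binomial generators.

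For part (2), the plan is to use Lemma~\ref{pslemma}. Since $I_{\mathscr{L}}$ has codimension $2$ and $\bbk[x]/I_{\mathscr{L}}$ has dimension $n-2$, Auslander--Buchsbaum gives that $I_{\mathscr{L}}$ is Cohen--Macaulay iff $\mathrm{pd}(\bbk[x]/I_{\mathscr{L}}) = 2$, iff $\beta_i(I_{\mathscr{L}}) = 0$ for $i \geq 2$. Via Lemma~\ref{pslemma}, this translates into the vanishing of $\tilde H_j(\Delta_b;\bbk)$ for $j \geq 1$ and every multidegree $b$. The two halves of the argument are: (a) if $G_B$ meets all four open quadrants, select one row from each quadrant and use this configuration to exhibit a multidegree $b$ for which $\Delta_b$ carries a nontrivial $1$-cycle, so $\beta_2(I_{\mathscr{L}}) \neq 0$ and $I_{\mathscr{L}}$ is not Cohen--Macaulay; (b) conversely, if some Gale diagram avoids an open quadrant, argue that after the $\GL_2(\bbZ)$ change of basis provided by part~(1)'s analysis every $\Delta_b$ is connected with no higher homology, whence Cohen--Macaulayness.

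I expect the main obstacle to be step (a) of part (2): constructing the explicit multidegree $b$ and verifying that $\Delta_b$ has a nontrivial $1$-cycle requires careful use of the four rows from the four open quadrants together with a detailed analysis of which monomials of degree $b$ lie in $I_{\mathscr{L}}$. The Peeva--Sturmfels argument organizes this via an Euler-characteristic style count which I would need to reproduce; the remaining pieces are essentially combinatorial bookkeeping.
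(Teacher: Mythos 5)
First, note that the paper does not prove this statement at all: it is quoted from \cite[Lemma~3.1, Proposition~4.1]{pssyzygies}, so there is no internal proof to compare against; I am therefore assessing your plan on its own. Your part (1) is essentially sound: for an $n\times 2$ matrix the only mixed square submatrices are $2\times 2$ ones coming from two rows in opposite open quadrants, so ``dominating'' is equivalent to the off-axis rows lying in a closed half-plane bounded by a coordinate axis, which after a rotation/reflection in $\GL_2(\bbZ)$ is exactly ``imbalanced''; combined with Theorem~\ref{thm:mixedDominating} this gives (1), including the generators.

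Part (2), however, has two genuine gaps. The first is an off-by-one in the homological translation: since $\beta_{j+1,C}(\bbk[x]/I)=\operatorname{rank}\tilde H_j(\Delta_C;\bbk)$ and Cohen--Macaulayness in codimension $2$ means $\beta_j(\bbk[x]/I)=0$ for $j\geq 3$, the correct criterion is $\tilde H_j(\Delta_C;\bbk)=0$ for all $j\geq 2$ and all fibers $C$, not $j\geq 1$; vanishing of $\tilde H_1$ for every fiber is impossible for a codimension-$2$ lattice ideal (it would force projective dimension $\leq 1$). Consequently, in step (a) you must exhibit a fiber whose complex carries a nontrivial $2$-cycle (the hollow tetrahedron of Proposition~\ref{prop:highSyzygyDegree} is exactly this), not a $1$-cycle; a $1$-cycle only certifies $\beta_2(\bbk[x]/I)\neq 0$, which says nothing about Cohen--Macaulayness. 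The second gap is in step (b): the statement is existential in the Gale diagram, and ``meets all four open quadrants'' is not a $\GL_2(\bbZ)$-invariant property, so ``some Gale diagram avoids an open quadrant'' does not imply Cohen--Macaulay. For example, the rows $(1,1),(1,-1),(-1,1),(-1,-1)$ meet all four open quadrants (hence the ideal is not Cohen--Macaulay, e.g.\ by Proposition~\ref{prop:highSyzygyDegree}), yet applying the shear $\left[\begin{smallmatrix}1&1\\0&1\end{smallmatrix}\right]$ yields the Gale diagram $\{(1,2),(1,0),(-1,0),(-1,-2)\}$ of the same lattice, which avoids the open second and fourth quadrants. What must actually be proved is that if \emph{no} Gale diagram meets all four open quadrants then the ideal is Cohen--Macaulay, and this is the hard half of the Peeva--Sturmfels result: it includes Cohen--Macaulay ideals that are not complete intersections (e.g.\ diagrams meeting exactly three open quadrants), for which ``part (1)'s analysis'' provides no change of basis and no information; one must directly show $\tilde H_{\geq 2}(\Delta_C;\bbk)=0$ for every fiber, and your plan currently contains no argument for this.
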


Because stretching or skewing a lattice $\bbZ B$ corresponds to
multiplying $B$ by a nonsingular  $2\times 2$ integer matrix, it is natural to
wonder how such an action transforms $G_{\bbZ B}$ and how this is
reflected in the corresponding lattice ideal. We illustrate below
how multiplication of $B$ by a nonsingular  $2\times 2$ integer matrix
can change a non-Cohen--Macaulay $I_{\bbZ B}$ to a complete intersection and vice-versa.

      \begin{figure}[!ht]
	\begin{center}
		\begin{tikzpicture}[scale=.5]
		
		\fill[blue, opacity=0.2] (-5/2,5) -- (5,5) -- (5,-5) -- (5/2,-5); 
		\fill[red, opacity=0.2] (5,10/3) -- (5,-5) -- (-5,-5) -- (-5,-10/3); 
		
		\node[label=right:$b_1$] (b1) at (3,1) {};
		\node[label=left:$b_2$] (b2) at (-2,4) {};
		\node[label=left:$b_3$] (b3) at (-2,-2) {};
		\node[label=right:$b_4$] (b4) at (3,-1.5) {};
		\node[label=left:\color{red}{$v$}] (v) at (2,-3) {};
		
		\draw[<->] (-5,0) -- (5,0); 
		\draw[<->] (0,-5) -- (0,5);
		
		\draw[thick, ->] (0,0) -- (b1);
		\draw[thick, ->] (0,0) -- (b2); 
		\draw[thick, ->] (0,0) -- (b3);
		\draw[thick, ->] (0,0) -- (b4);
		\draw[dashed, red, ->] (0,0) -- (v);
		\draw[dashed, red, <->] (5,10/3) -- (-5,-10/3);
		
		\end{tikzpicture}
	\end{center}
	\caption{A non-Cohen--Macaulay Gale diagram with new quadrants
          shaded}
\label{figure:prop:4x2nonCMtoCM}
\end{figure}
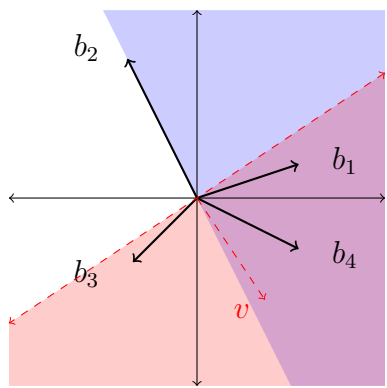

\begin{proposition} \label{prop:4x2nonCMtoCM}
For any $4 \times 2$ matrix $B$ such that $G_{B}$ touches all four open quadrants of
$\bbR^2$ (so that $I_{\bbZ B}$ is not Cohen--Macaulay), there exists
a nonsingular  $2\times 2$ integer matrix
$M$ such that $G_{BM}$ is imbalanced (so that 
$I_{\bbZ BM}$ is a complete intersection).  
\end{proposition}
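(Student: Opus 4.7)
The plan is to construct a nonsingular integer matrix $M$ with columns $m_1, m_2 \in \bbZ^2$ so that some distinguished row $b_j$ of $B$ satisfies $b_j \cdot m_1 = 0$ (placing $b_j M$ on the new $y$-axis), while the remaining three rows satisfy $b_i \cdot m_2 \geq 0$ (placing $b_i M$ in the closed upper half-plane). Together these are exactly the conditions for $G_{BM}$ to be imbalanced, so Theorem~\ref{thm:codim2CM}(1) will then yield that $I_{\bbZ BM}$ is a complete intersection.

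The heart of the argument is a pigeonhole claim: one of the four rows can be removed so that the other three lie in a common closed half-plane through the origin. Viewing the four $b_i$ on the unit circle by their angular directions, let $g_1, g_2, g_3, g_4$ be the consecutive angular gaps, so $g_1 + g_2 + g_3 + g_4 = 2\pi$. Because the Gale diagram meets every open quadrant, no closed half-plane can contain all four vectors, so every $g_i < \pi$. On the other hand, the four cyclically adjacent pair-sums $g_{k-1} + g_k$ total $4\pi$, so at least one is $\geq \pi$; letting $b_j$ be the vertex between the two gaps forming such a pair, the merger of those gaps is an arc of measure $\geq \pi$, and the other three $b_i$ are confined to a closed half-plane $H$.

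Given $b_j$ and $H$, take $m_1 \in \bbZ^2$ to be a primitive integer vector perpendicular to $b_j$, and take $m_2 \in \bbZ^2$ to be a primitive integer inward normal to $H$. The candidate set for $m_2$ is the cone
\[
C = \{v \in \bbR^2 : b_i \cdot v \geq 0 \text{ for all } i \neq j\},
\]
which is a nontrivial rational cone (it contains the normal to $H$) and thus is either two-dimensional or a ray with rational direction; integer points in $C$ are therefore available. It remains to check that $m_2$ can be chosen not parallel to $m_1$, which makes $M = [m_1 \mid m_2]$ nonsingular. In the two-dimensional case, $m_2$ can be picked off of the single line through $m_1$. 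In the ray case, $C$ is perpendicular to a pair of antipodal rows $b_{i_1}, b_{i_2}$ with $i_1, i_2 \neq j$; if $m_2 \parallel m_1$ were forced, then $b_j$ would be parallel to $b_{i_1}$ as lines through the origin, placing $b_j$ in the same open quadrant as either $b_{i_1}$ or $b_{i_2}$, contradicting the hypothesis that the four rows occupy four distinct open quadrants.

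The verification is immediate: $(b_j M)_1 = b_j \cdot m_1 = 0$, and for $i \neq j$ we have $(b_i M)_2 = b_i \cdot m_2 \geq 0$, so $G_{BM}$ is imbalanced and Theorem~\ref{thm:codim2CM}(1) finishes the proof. The only ingredient that is not entirely routine is the angular pigeonhole step producing $b_j$; the subsequent construction of $M$ is a matter of choosing integer representatives inside rational cones.
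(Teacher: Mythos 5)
Your proposal is correct and takes essentially the same approach as the paper: you build $M$ with one column orthogonal to a distinguished row (sending it to the vertical axis) and the other column chosen in the rational dual cone of the remaining three rows (sending them to the closed upper half-plane), which is exactly the structure of the matrix $M$ in the paper's proof. The only difference is organizational: you locate the distinguished row by an angular pigeonhole argument and treat all configurations uniformly, whereas the paper splits into cases according to whether the pairs of rows in opposite open quadrants are linearly dependent.
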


\begin{proof}
Let $b_1,\dots,b_4$ be the rows of $B=[b_{ij}]$ and assume that $b_i$ lies in
the $i$th open quadrant of $\bbR^2$. 

If both sets $\{b_1,b_3\}$, $\{b_2,b_4\}$ are linearly dependent, let
$M = \left[\begin{smallmatrix} -b_{12} &  -b_{22}\\ \phantom{-}b_{11} &
    \phantom{-}b_{21} \end{smallmatrix} \right]$. Since $b_1$ and $b_2$ are
linearly independent, $\det(M) \neq 0$. But every row of $BM$ contains
a zero entry, so the corresponding Gale diagram $G_{BM}$ is imbalanced.

Now assume that $\{b_1,b_3\}$ are linearly independent. Then the cone
of nonnegative combinations of $b_1$ and $b_3$ contains either the
second or the fourth quadrant of $\bbR^2$. Suppose that it contains
the fourth quadrant. Then we can find $v=(v_1,v_2) \in \bbZ^2$ lying in the fourth
quadrant of $\bbR^2$ such that the angles between $v$ and $b_1$, and between $v$
and $b_3$ are both less than $\pi/2$, so that $b_1\cdot v \geq 0$ and
$b_3 \cdot v \geq 0$. Since they lie in the same quadrant, $b_4 \cdot
v \geq 0$. Now let $M = \left[ \begin{smallmatrix}  \phantom{-}b_{22} & v_1 \\
    -b_{21} & v_2 \end{smallmatrix} \right]$. Since $v$ lies in the
fourth quadrant, and $(b_{22}, -b_{21})$ lies in the first, they are
linearly independent, and so $\det(M)\neq 0$. By construction,
$G_{BM}$ is imbalanced. See Figure~\ref{figure:prop:4x2nonCMtoCM} for
a pictorial illustration of this argument.
\end{proof}

In the next proposition, we determine which imbalanced Gale diagrams
can be transformed into Gale diagrams of non-Cohen--Macaulay lattice
ideals. If $b \in \mathbb{R}^2 \smallsetminus \{ 0 \}$, the \emph{ray}
spanned by $b$ is defined to be $\{ t b
\mid t \in \mathbb{R}, t\geq 0\}$. If $B \in \mathbb{Z}^{n\times 2}$, we
consider the collection of rays spanned by the rows of $B$, and
associate this collection to the Gale diagram $G_B$. Since we
assume all rows of $B$ are nonzero, none of these rays is a point.

\begin{proposition} \label{CItononCMcodim2}
Let $B$ be an $n \times 2$-matrix such that $G_{B}$ is imbalanced. 
There exists a nonsingular  $2\times 2$ integer matrix
$M$ such that $G_{BM}$ meets the four
open quadrants of $\mathbb{R}^2$ (which implies that $I_{\mathbb{Z}
  BM}$ is not Cohen--Macaulay) if and only if $G_B$ spans more than three rays.
\end{proposition}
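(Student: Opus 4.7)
The ``only if'' direction is immediate: since $M$ is invertible, it acts bijectively on rays, so $G_B$ and $G_{BM}$ contain the same number of distinct rays; if $G_{BM}$ meets each of the four open quadrants, then $G_{BM}$, and hence $G_B$, contains at least four distinct rays.

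For the ``if'' direction, my plan proceeds in two steps. The first is to exploit the imbalanced condition together with the mixed condition (guaranteed by the standing assumption that $\bbZ B$ is a positive lattice). Mixedness of the second column of $B$, together with imbalance, forces a row of the form $(0,b_{i2})$ with $b_{i2}<0$, producing a ray of $G_B$ along the negative $y$-axis; mixedness of the first column yields a ray in closed $Q_1$ (with $b_{i1}>0$) and a ray in closed $Q_2$ (with $b_{i1}<0$). Thus $G_B$ always contains three canonical rays of these kinds, and the hypothesis of more than three rays supplies a fourth distinct ray, which must fall into one of three sub-cases: a second closed-$Q_1$ ray, a second closed-$Q_2$ ray, or a ray along the positive $y$-axis. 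In each sub-case, I will choose four rays $r_1, r_2, r_3, r_4 \in G_B$ in counterclockwise angular order (with angles $\theta_1 < \theta_2 < \theta_3 < \theta_4$) and check directly that the four consecutive gaps $g_i = \theta_{i+1} - \theta_i$ (with $g_4 = \theta_1 + 2\pi - \theta_4$) are all strictly less than $\pi$: the ray at $3\pi/2$ contributes two gaps of length in $[\pi/2, \pi)$, while the rays in $[0,\pi]$ control the other two. I expect this case-by-case gap computation to be the main bookkeeping, though the calculations are routine.

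The second step is a geometric construction of $M$. Given such a quadruple with all $g_i < \pi$, I would seek columns $u, v \in \bbZ^2$ of $M$ with the sign patterns
\[
u\cdot r_1,\; u\cdot r_4 > 0 > u\cdot r_2,\; u\cdot r_3, \qquad v\cdot r_1,\; v\cdot r_2 > 0 > v\cdot r_3,\; v\cdot r_4,
\]
so that $r_i M = (r_i \cdot u,\, r_i \cdot v) \in Q_i$ for $i=1,\ldots,4$. The existence of such a $u$ amounts to finding a line through the origin with its two antipodal directions lying one each in the gaps $g_1$ and $g_3$; a short calculation shows this is possible exactly when $g_2 < \pi$ and $g_4 < \pi$, and symmetrically for $v$ exactly when $g_1 < \pi$ and $g_3 < \pi$. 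All four inequalities hold by the first step. By density of rational slopes in each open angular interval, I can pick $u$ and $v$ with integer entries; since their defining lines lie in distinct (non-antipodal) gaps, $u$ and $v$ are linearly independent, and $M = [u \mid v]$ is a nonsingular integer matrix for which $G_{BM}$ meets each of the four open quadrants.
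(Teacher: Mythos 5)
Your proposal is correct and is essentially the paper's own argument recast in separating-line language: the paper likewise uses the forced negative $y$-axis row plus three rays in the closed upper half-plane (rightmost $b_a$, a middle $b_c$, leftmost $b_d$), chooses integer vectors $s,t$ in the angular gaps between consecutive rays, and takes $M$ with columns the $90^\circ$ rotations of $s$ and $t$ — exactly your $u$ and $v$ — verifying the same sign pattern. The only cosmetic difference is that your ``all gaps $<\pi$'' criterion treats the degenerate case where $G_B$ lies in the coordinate axes uniformly, while the paper handles it separately with an explicit $M$.
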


\begin{proof}
Note that if $M$ is a nonsingular  $2\times 2$ integer matrix,
then $G_B$ and $G_{BM}$ span the
same number of rays. If $G_{BM}$ meets the four open quadrants
of $\mathbb{R}^2$, it spans at least four rays, and therefore, so does $G_B$.

Now assume that  $G_B$ spans more than three rays.
We first consider the case where $G_B$ is contained in the coordinate
axes. Then $G_B$ must span all four half axes. Using $M =\left[ \begin{smallmatrix} 1 & -1 \\ 1 &
  \phantom{-}1 \end{smallmatrix} \right]$, we see that $G_{BM}$ meets the four
open quadrants of $\mathbb{R}^2$.

Now consider the case where our imbalanced $G_B$ is not contained in the coordinate
axes (and spans at least four rays). Then, 
since $\bbZ B \cap \bbN^n = 0$, we have that both
the first and second open quadrants of $\bbR^2$ 
contain elements of $G_B$, and there is some $b_j$ where $b_{j1} = 0$ and $b_{j2} <
0$. 

Consider the rightmost and leftmost elements of
$G_{B}$, that is, the vectors with the smallest positive and largest
negative slopes. Denote them $b_a = (b_{a1},b_{a2})$ and $b_d =
(b_{d1},b_{d2})$ respectively. Alternatively, we can characterize this
by  
  $$\arccos (b_{a1}/|b_a|) < \arccos(b_{i1}/|b_i|) < \arccos(b_{d1}/|b_d|)$$
for all $i$ such that $b_i$ lies in the upper half plane. 

Since $G_B$ spans at least four rays, there exists $b_c=(b_{c1},b_{c2})
\in G_B$ between $b_a$ and $b_d$, where we formalize ``between" to mean that 
  $\arccos (b_{a1}/|b_a|) < \arccos(b_{c1}/|b_c|) < \arccos(b_{d1}/|b_d|)$.
As this is moderately unappealing, the reader may choose to simply
visualize starting at the positive $x$-axis and sweeping
counter-clockwise, and declaring a vector between two others if they
encounter it after the first vector but before the second.  

Now choose $s = (s_1,s_2) \in \bbZ^2$ between $b_a$ and $b_c$ and $t =
(t_1,t_2) \in \bbZ^2$
between $b_c$ and $b_d$. We may assume that $s_1 > 0$ and $t_1 <
0$. Note that $s_2, t_2 > 0$.  

The matrix
  \[
  M = \begin{bmatrix} 
  -s_2 & \phantom{-}t_2 \\ 
  \phantom{-}s_1 & -t_1
  \end{bmatrix}
  \]
is nonsingular, since $s$ and $t$ are linearly independent (they belong
to adjacent quadrants of $\mathbb{R}^2$).

As stated previously, $G_B$ contains at least one element whose
first entry is zero, and whose second entry is negative.
Our construction yields the following sign pattern:
  \[
  \begin{bmatrix} 
  b_{a1} & b_{a2} \\
  b_{c1} & b_{c2} \\
  b_{d1} & b_{d2} \\ 
  0 & b_{j2}
  \end{bmatrix}  
  M
  = 
  \begin{bmatrix} 
  - & + \\
  + & + \\
  + & - \\ 
  - & - 
  \end{bmatrix}.
  \]
We conclude that the Gale diagram $G_{BM}$ has an arrangement of vectors in
all four open quadrants of $\mathbb{R}^2$, so $I_{\bbZ BM}$ is not
Cohen--Macaulay. Figure~\ref{figure:CItononCMcodim2} illustrates this argument.
\end{proof}

      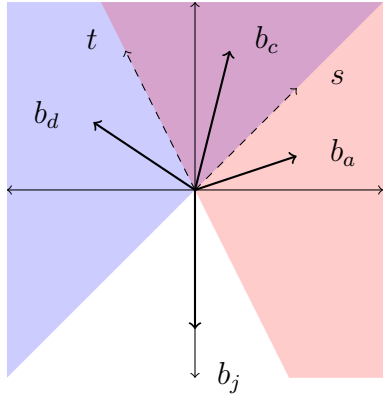
\begin{figure}[!ht]
      \begin{center}
      \begin{tikzpicture}[scale=.5]

      \fill[blue, opacity=0.2] (-5,-5) -- (5,5) -- (-5,5) -- (-5,-5);
      \fill[red, opacity=0.2] (-2.5,5) -- (5,5) -- (5,-5) -- (2.5,-5) -- (-2.5,5);

      \node (ori) at (0,0) {};
      \node[label=left: $b_d$] (bd) at (-3,2) {};
      \node[label=right:$b_c$] (bc) at (1,4) {};
      \node[label=right:$b_a$] (ba) at (3,1) {};
      \node[label=below right:$b_j$] (bj) at (0,-4) {};
      \node[label=left:$t$] (t) at (-2,4) {};
      \node[label=right:$s$] (s) at (3,3) {};

      \draw[<->] (-5,0) -- (5,0); 
      \draw[<->] (0,-5) -- (0,5);
      \draw[thick, ->] (0,0) to (ba);
      \draw[thick, ->] (0,0) to (bc);
      \draw[thick, ->] (0,0) to (bd);
      \draw[thick, ->] (0,0) to (bj);
      \draw[dashed, ->] (0,0) to (s);
      \draw[dashed, ->] (0,0) to (t);

      \end{tikzpicture}
      \end{center}
      \caption{An imbalanced Gale diagram with new quadrants in red
        and blue}
      \label{figure:CItononCMcodim2}
      \end{figure}

The main result in this section brings together
Propositions~\ref{prop:4x2nonCMtoCM} and~\ref{CItononCMcodim2}.

\begin{theorem}
\label{thm:codim2case}
There are infinitely many examples of rank $2$ non-saturated lattices
$\mathscr{L}$ such that one of $I_{\mathscr{L}}$,
$I_{\mathscr{L}^{\textrm{sat}}}$ is a complete intersection, and the
other one is not Cohen--Macaulay.
\end{theorem}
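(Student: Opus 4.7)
The strategy is to interpret multiplication of a rank-$2$ basis matrix $B$ by a nonsingular $M \in \bbZ^{2 \times 2}$ as passing from the lattice $\bbZ B$ to the sublattice $\bbZ(BM) \subseteq \bbZ B$ of finite index $|\det M|$. Propositions~\ref{prop:4x2nonCMtoCM} and~\ref{CItononCMcodim2} already provide, for any $B$ of the appropriate type, an $M$ that flips the Cohen--Macaulay status of the associated lattice ideal. The plan is therefore to start with a \emph{saturated} $\mathscr{L}_0 = \bbZ B$ and apply one of these propositions; the resulting lattice $\mathscr{L} := \bbZ(BM)$ will be a non-saturated sublattice whose ideal has the opposite Cohen--Macaulay behavior.

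Concretely, I would take the $4 \times 2$ matrix $B$ with rows $(1,0), (0,1), (-1,0), (0,-1)$, so that $\mathscr{L}_0 = \bbZ B = \{(a,b,-a,-b) : a,b \in \bbZ\}$ is positive and saturated, and $G_B$ is imbalanced spanning four rays. Thus $I_{\mathscr{L}_0}$ is a complete intersection by Theorem~\ref{thm:codim2CM}. Proposition~\ref{CItononCMcodim2} then yields a nonsingular $M \in \bbZ^{2 \times 2}$ (for instance $M = \bigl[\begin{smallmatrix} 1 & -1 \\ 1 & \phantom{-}1 \end{smallmatrix}\bigr]$) such that $G_{BM}$ meets all four open quadrants, so $I_{\bbZ(BM)}$ is not Cohen--Macaulay. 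The key observation is that $|\det M| > 1$ is forced: if $M \in \GL_2(\bbZ)$, then $\bbZ(BM) = \bbZ B$ and $I_{\mathscr{L}_0}$ would be simultaneously a complete intersection and not Cohen--Macaulay. Thus $\mathscr{L} := \bbZ(BM)$ is a proper finite-index sublattice of the saturated lattice $\mathscr{L}_0$, which forces $\mathscr{L}^{\textrm{sat}} = \mathscr{L}_0$. The reverse construction works equally well, beginning with a $4 \times 2$ matrix whose Gale diagram touches all four open quadrants and invoking Proposition~\ref{prop:4x2nonCMtoCM}.

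To produce infinitely many examples I would replace $M$ by $kM$ for each positive integer $k$, setting $\mathscr{L}_k := \bbZ(B \cdot kM) = k \cdot \bbZ(BM)$. The entries of $B \cdot kM$ have the same signs as those of $BM$, so $G_{B\cdot kM}$ inherits the four-quadrant property, and $I_{\mathscr{L}_k}$ remains non-Cohen--Macaulay by Theorem~\ref{thm:codim2CM}. Since saturation is invariant under positive scalar multiplication, $\mathscr{L}_k^{\textrm{sat}} = \mathscr{L}_0$ for every $k \geq 1$. The indices $[\mathscr{L}_0 : \mathscr{L}_k] = k^2 |\det M|$ are pairwise distinct, so the $\mathscr{L}_k$ form an infinite family of distinct non-saturated lattices with the required property. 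The main subtlety is precisely the implicit non-unimodularity of $M$ in the two propositions: it is this fact, rather than any direct computation, that allows Propositions~\ref{prop:4x2nonCMtoCM} and~\ref{CItononCMcodim2} to yield genuine examples of non-saturated lattices.
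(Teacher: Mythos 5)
Your proposal is correct and rests on the same mechanism as the paper's proof: start from a saturated lattice $\bbZ B$ to which Proposition~\ref{CItononCMcodim2} (or Proposition~\ref{prop:4x2nonCMtoCM}) applies, pass to $\mathscr{L}=\bbZ(BM)$, and use that $\mathscr{L}^{\textrm{sat}}=\bbZ B$, so the Cohen--Macaulay status flips between the lattice and its saturation. Two differences are worth recording. First, the paper gets infinitude by varying the starting matrix $B$ over infinitely many saturated choices, while you fix one $B$ and one $M$ and rescale, taking $\mathscr{L}_k=k\,\bbZ(BM)$; since scaling preserves the sign pattern of the Gale diagram (hence the non--Cohen--Macaulay or imbalanced property) and preserves the saturation, and the indices $[\mathscr{L}_0:\mathscr{L}_k]=k^2|\det M|$ distinguish the lattices, this yields an explicit infinite family, all with the same saturation --- a perfectly valid and arguably more concrete route to ``infinitely many.'' Second, you make explicit a point the paper leaves implicit: $M$ cannot be unimodular, since otherwise $\bbZ(BM)=\bbZ B$ and $I_{\bbZ B}$ would be simultaneously a complete intersection and non--Cohen--Macaulay; hence $\bbZ(BM)$ is a proper finite-index sublattice of the saturated lattice $\bbZ B$, so it is genuinely non-saturated with $\mathscr{L}^{\textrm{sat}}=\bbZ B$. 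Finally, note that your detailed family realizes only one of the two asymmetries ($I_{\mathscr{L}}$ not Cohen--Macaulay, $I_{\mathscr{L}^{\textrm{sat}}}$ a complete intersection), which already suffices for the statement as written; the paper carries out both directions, and your sketched ``reverse construction'' via Proposition~\ref{prop:4x2nonCMtoCM} does go through by the same scaling argument.
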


\begin{proof}
Note that there are infinitely many $4\times 2$ integer matrices $B$
with Gale diagram meeting the four open quadrants of $\mathbb{R}^2$
and whose columns span a saturated lattice. Let $M$ as in
Proposition~\ref{prop:4x2nonCMtoCM}. Define
$\mathscr{L} \subset \bbZ^4$ to be the lattice spanned by the columns
$BM$, so that $\mathscr{L}^{\textrm{sat}}$ 
is the lattice spanned by the columns of $B$. We see
that $I_{\mathscr{L}}$ is a complete intersection, while
$I_{\mathscr{L}^{\textrm{sat}}}$ is not Cohen--Macaulay.

Similarly, there are infinitely many  
$B \in \mathbb{Z}^{n\times 2}$ whose columns span a saturated
lattice, and whose Gale diagram is imbalanced and spans at least four
rays. With $M$ as in Proposition~\ref{CItononCMcodim2}, and
$\mathscr{L} \subset \bbZ^n$ the lattice spanned by the columns of $BM$, 
we have that
$I_{\mathscr{L}}$ is not
Cohen--Macaulay, and $I_{\mathscr{L}^{\textrm{sat}}}$ is a complete intersection.
\end{proof}

\section{Lattice Ideals in Codimension $\geq$ 3} 

The goal of this section is to generalize Theorem~\ref{thm:codim2case}
to higher ranks. We begin by reviewing relevant results on Betti
numbers of lattice ideals. 

Let $B \in \mathbb{Z}^{n\times m}$ of full rank $m$, and 
consider $R = \bbk[x]/I_{\bbZ B}$, $\Gamma = \bbZ^n/\bbZ B$. The
congruence classes of $\mathbb{Z}^n$ modulo $\Gamma$ are called
\emph{fibers}. The polynomial ring $\bbk[x]$ is $\Gamma$-graded by
setting $\deg(x^u)$ to be the fiber of $\Gamma$ containing $u$. The
ideal $I_{\mathbb{Z}B}$ is homogeneous with respect to this grading,
  and moreover the quotient $R$ is \emph{finely graded}, meaning that its
  graded pieces have dimension at most one. If $C$ is a fiber of
  $\Gamma$, we denote the corresponding graded piece of $R$ by $R_C$.
This grading gives a decomposition
$\textrm{Tor}_j^{\bbk[x]}(R,\bbk) = \displaystyle \bigoplus_{C \in \Gamma}
\textrm{Tor}_j^{\bbk[x]}(R,\bbk)_C$. The \emph{multi-graded Betti
  number} of $R$  in degree $C$ is  
$\beta_{j,C} = \beta_{j,C}(R)= \dim_\bbk \textrm{Tor}_j^{\bbk[x]}(R,\bbk)_C$. 

The Betti number $\beta_{j,C}$ can be computed using simplicial homology.
For each fiber $C$ of $\Gamma$, define a simplicial complex  $\Delta_C$ on
$[n]$, where $F \subseteq [n]$ is a face of $\Delta_C$ if and
only if $C$ contains a nonnegative vector $a$ whose support contains $F$.  

\begin{lemma}{\cite[Lemma~4.1]{ahkoszul}, \cite[Lemma~2.1]{pssyzygies}}\label{pslemma}
The multigraded Betti number $\beta_{j+1,C}(\bbk[x]/I_{\bbZ B})$ equals the rank of the
$j$-th reduced homology group $\tilde{H}_j(\Delta_C;\bbk)$ of the
simplicial complex $\Delta_C$.  
\end{lemma}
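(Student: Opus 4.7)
The plan is to compute $\textrm{Tor}^{\bbk[x]}_\bullet(R,\bbk)$ using the Koszul complex on $x_1,\dots,x_n$ and then identify the $\Gamma$-degree-$C$ strand of the tensored complex with the augmented reduced simplicial chain complex of $\Delta_C$ (shifted by one in homological degree).

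First I would set up the resolution. The Koszul complex $K_\bullet = K_\bullet(x_1,\dots,x_n)$ is a $\Gamma$-graded minimal free resolution of $\bbk$ over $\bbk[x]$; placing the basis element $e_F$ (for $F\subseteq[n]$) in $\Gamma$-degree equal to the class of $\sum_{i\in F}e_i$, the Koszul differential is $d(e_F)=\sum_{i\in F}(-1)^{\sigma(i,F)} x_i\, e_{F\setminus\{i\}}$ for the standard Koszul signs $\sigma(i,F)$. Tensoring with $R$ and extracting the degree-$C$ strand yields, in position $j$, the vector space $\bigoplus_{|F|=j} R_{C-\deg_\Gamma F}$, and by definition $\textrm{Tor}^{\bbk[x]}_j(R,\bbk)_C$ is its $j$-th homology.

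The key step is to read off this strand combinatorially. Because $R$ is finely $\Gamma$-graded, $R_D$ is one-dimensional when the fiber $D$ contains a nonnegative vector and zero otherwise. Hence $R_{C-\deg_\Gamma F}\neq 0$ exactly when $C$ contains a nonnegative vector whose support contains $F$, i.e.\ precisely when $F\in\Delta_C$. For each such $F$, pick a nonnegative $a_F\in C$ with $\textrm{supp}(a_F)\supseteq F$; the monomial $[F]:=x^{a_F-\sum_{i\in F}e_i}$ is a nonzero generator of $R_{C-\deg_\Gamma F}$, and these $[F]$ form a $\bbk$-basis of the $j$-th term indexed by the $j$-faces of $\Delta_C$. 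Applied to $[F]$, the Koszul differential becomes $\sum_{i\in F}(-1)^{\sigma(i,F)} x_i\cdot[F]$, and each $x_i\cdot[F]$ is a monomial representative of the one-dimensional piece $R_{C-\deg_\Gamma(F\setminus\{i\})}$, hence a nonzero scalar multiple of $[F\setminus\{i\}]$ (here one uses that positivity of $\bbZ B$ keeps $I_{\bbZ B}$ free of monomials, so no $x^b$ vanishes in $R$).

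With a coherent choice of the $a_F$ (for instance, fixing a nonnegative representative of $C$ per maximal face of $\Delta_C$ and reusing it for subfaces), every such scalar equals $1$, and the degree-$C$ strand of $R\otimes K_\bullet$ is identified on the nose with $\widetilde{C}_{\bullet-1}(\Delta_C;\bbk)$. Passing to homology gives $\beta_{j+1,C}(R)=\dim_\bbk \tilde H_j(\Delta_C;\bbk)$, which is the claim. The main obstacle is this coherence bookkeeping: verifying that the monomial representatives can be chosen simultaneously so that the Koszul differential becomes the simplicial boundary literally, not merely up to nonzero scalars. Since each $R_D$ is a uniquely determined line whenever nonzero, the comparison is already forced up to units, so this step reduces to a straightforward normalization along the Hasse diagram of $\Delta_C$.
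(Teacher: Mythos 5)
Your argument is correct and is essentially the standard proof from the sources cited for this lemma (the paper itself gives no proof, only the citations): extract the $\Gamma$-degree-$C$ strand of $R\otimes K_\bullet(x_1,\dots,x_n)$, observe via fine grading that its $j$-th term is spanned by the faces $F\in\Delta_C$ with $|F|=j$, and identify the strand with the shifted reduced chain complex $\widetilde{C}_{\bullet-1}(\Delta_C;\bbk)$. The normalization step you flag as the ``main obstacle'' is actually vacuous: any two nonnegative monomials $x^a$, $x^b$ with $a-b\in\bbZ B$ satisfy $x^a-x^b\in I_{\bbZ B}$ and hence are literally equal in $R$, so each one-dimensional piece $R_D$ has a canonical generator (the common image of all its monomials) and every comparison scalar is $1$ with no coherent choice of representatives needed.
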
 

We recall that $R$ is Cohen--Macaulay if and only if $\beta_{j,C}(R)=0$
whenever $j>\textrm{codim}_{\bbk[x]}(R)$.

Since the simplicial complex $\Delta_C$ is defined using the nonnegative
elements of the fiber $C$, we consider only those elements when
working with specific fibers.

\begin{example}
Consider the lattice $\bbZ B \subset \bbZ^4$ where 
$B = \scriptsize{\begin{bmatrix} \phantom{-}2 & 3 & -1 & -4 \\ -1 & 3
    & \phantom{-}5 & -7 \end{bmatrix}^T}.$ 
For the fiber 
$C \supseteq \{~(2,~6,~5,~0), (3,~3,~0,~7), (0,~3,~6,~4), (1,~0,~1,~11)~\} $, 
the simplicial complex $\Delta_C$ is a hollow tetrahedron. By
Lemma~\ref{pslemma}, we see that  
$\beta_{3,C}(\bbk[x]/I_{\bbZ B}) = \textrm{rank}_\bbk~\tilde{H}_2(\Delta_C;\bbk) = 1$. 
Since $R$ has codimension $2$, we conclude that $R$ is not Cohen--Macaulay.   
\end{example}

In the previous example, the high syzygy constructed occurs in the
fiber $C$ containing $(B_1)_+ + (B_2)_+$, where 
$B_1$ and $B_2$ are the columns of $B$. This turns out to be the case in general.  

\begin{proposition} 
\label{prop:highSyzygyDegree}
Let $B$ be an $n \times 2$ integer matrix of rank $2$ with columns
$B_1$ and $B_2$ 
such that $G_{B}$ meets the four open quadrants of $\mathbb{R}^2$. 
Let $C_\circ$ be the fiber of $\bbZ^n/\bbZ B$ containing $(B_1)_+ + (B_2)_+$.  
Then $\beta_{3,C_\circ}(\bbk[x]/I_{\bbZ B}) > 0$.
\end{proposition}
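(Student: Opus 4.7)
The plan is to combine Lemma~\ref{pslemma} with an explicit description of $\Delta_{C_\circ}$ that identifies it as a ``hollow tetrahedron''. For each $(\epsilon_1, \epsilon_2) \in \{+, -\}^2$, the vector $v_{\epsilon_1\epsilon_2} := (B_1)_{\epsilon_1} + (B_2)_{\epsilon_2}$ is a nonnegative element of $C_\circ$, since $B_1 = (B_1)_+ - (B_1)_-$ and $B_2$ both lie in $\bbZ B$. Its support is
\[
F_{\epsilon_1\epsilon_2} = \{\, i \in [n] : \epsilon_1 b_{i1} > 0 \text{ or } \epsilon_2 b_{i2} > 0 \,\},
\]
whose complement in $[n]$ is the set of indices whose row of $B$ lies in the closed quadrant opposite to $(\epsilon_1, \epsilon_2)$. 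Each $F_{\epsilon_1\epsilon_2}$ is a face of $\Delta_{C_\circ}$, and by Lemma~\ref{pslemma} it suffices to prove $\Delta_{C_\circ} \simeq S^2$.

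The crux is to show that the four $F_{\epsilon_1\epsilon_2}$ are in fact all the facets of $\Delta_{C_\circ}$. Parametrize a general nonnegative $u \in C_\circ$ as $u = v_{++} + \alpha B_1 + \beta B_2$ with $(\alpha, \beta) \in \bbZ^2$; the main claim is that $u \geq 0$ forces $(\alpha,\beta) \in \{-1, 0\}^2$, whence $u$ is one of the four $v_{\epsilon_1\epsilon_2}$. By hypothesis, for each $(\epsilon_1,\epsilon_2) \in \{+,-\}^2$ there exists an index $i_{\epsilon_1\epsilon_2}$ whose row of $B$ lies in the corresponding open quadrant; expanding $u_{i_{\epsilon_1\epsilon_2}} \geq 0$ yields four linear inequalities in $(\alpha, \beta)$. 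Using that $|b_{ij}| \geq 1$ whenever nonzero, a short case analysis on the signs of $\alpha$ and $\beta$ closes the argument: for example, $\alpha \geq 1$ together with $u_{i_{--}} \geq 0$ forces $\beta \leq -1$, which then makes $u_{i_{-+}}$ strictly negative; the other three cases are analogous.

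Finally, apply the Nerve Lemma to the cover of $\Delta_{C_\circ}$ by the four simplices on vertex sets $F_{\epsilon_1\epsilon_2}$. Each of these simplices is contractible, as is every nonempty intersection of them. A direct sign check shows that every pair and triple intersection is nonempty (each contains some $i_{\epsilon_1'\epsilon_2'}$ from an appropriate open quadrant), while the fourfold intersection $F_{++} \cap F_{-+} \cap F_{+-} \cap F_{--}$ is empty because the union of the four complements covers $[n]$. Hence the nerve of the cover is $\partial \Delta^3 \cong S^2$, so $\Delta_{C_\circ} \simeq S^2$ and $\tilde H_2(\Delta_{C_\circ}; \bbk) \cong \bbk$, completing the plan.

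The main obstacle I anticipate is the integer argument pinning $(\alpha,\beta)$ to $\{-1,0\}^2$; this is exactly where the hypothesis that $G_B$ meets all four open quadrants is essential. Without a row of $B$ in each of the four open quadrants, additional nonnegative elements of $C_\circ$ can appear, the facet structure of $\Delta_{C_\circ}$ breaks, and the complex need not be homotopy-equivalent to $S^2$.
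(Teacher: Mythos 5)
Your proposal is correct and follows essentially the same route as the paper: you classify the nonnegative elements of $C_\circ$ as the four vectors $(B_1)_{\pm}+(B_2)_{\pm}$ by the same sign/case analysis using one row of $B$ in each open quadrant, and then identify $\Delta_{C_\circ}$ as homotopy equivalent to the boundary of a tetrahedron. The only (minor) difference is cosmetic: you justify the last step via the nerve of the cover by the four facet simplices, where the paper asserts a deformation retraction onto the hollow tetrahedron on the four quadrant vertices; both give $\tilde{H}_2(\Delta_{C_\circ};\bbk)\cong\bbk$ and hence $\beta_{3,C_\circ}>0$ by Lemma~\ref{pslemma}.
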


\begin{proof}
Since $G_B$ meets the four open quadrants of $\bbR^2$, we may assume
that for $1 \leq i \leq 4$, the $i$th row of $B$ meets the $i$th
quadrant of $\bbR^2$. For convenience, we write the submatrix of $B$
consisting of its first four rows as:
\[
\begin{bmatrix}
t_1 & t_2 \\
y_1 & y_2 \\
z_1 & z_2 \\
w_1 & w_2
\end{bmatrix} \textrm{ with sign pattern } 
\begin{bmatrix}
+ & + \\
+ & - \\
- & + \\ 
- & - \\
\end{bmatrix}.
\]

An element of $C_\circ$, different from $(B_1)_++(B_2)_+$, 
whose entries are all nonnegative can be written as $B
\scriptsize{\begin{bmatrix} u \\ 
    v \end{bmatrix} } + (B_1)_+ + (B_2)_+$, where 
$0 \neq \scriptsize{\begin{bmatrix} u \\ v \end{bmatrix} }\in \bbZ^2$ is such
that  
\begin{align}
B \begin{bmatrix} u \\ v \end{bmatrix} + 
(B_1)_+ + (B_2)_+ & \geq 0  \textrm{ coordinatewise.} \label{fullsupp}
\end{align}

In particular, restricting to the first four rows of $B$, we have that

\begin{align}
(u+1)t_1 + (v+1)t_2 & \geq 0 \label{fullsupp1}\\
(u+1)y_1 + vy_2 & \geq 0 \label{fullsupp2}\\
uz_1 + (v+1)z_2 & \geq 0 \label{fullsupp3}\\
uw_1 + vw_2 & \geq 0 \label{fullsupp4}
\end{align}

Because both $w_1, w_2 < 0$ and either $u$ or $v$ is nonzero,
equation~\eqref{fullsupp4} implies that at least one of $u,v < 0$.  
Suppose then that $u < 0$. Since $y_1 > 0$, we have $(u+1)y_1 \leq
0$, so $(u+1)y_1 + v y_2 \leq v y_2$. By~\eqref{fullsupp2}, we have $v y_2 \geq 0$. As
$y_2 < 0$, we see that $v \leq 0$. 
If $v=0$, equation~\eqref{fullsupp2} reduces to $(u+1)y_1 \geq 0$, so
$(u+1)y_1 = 0$, which means that $u=-1$. Otherwise, $v<0$, but then both
$(u+1)t_1 , (v + 1)t_2 \leq 0$, so by~\eqref{fullsupp1}, $(u+1)t_1  = 
(v + 1)t_2 = 0$, and therefore $u=v=-1$. 

The case where $v < 0$ is similar, leading to the possibilities $u=0,
v=-1$, and $u=v=-1$.

We conclude that the only coordinatewise nonnegative elements of $C_\circ$
other than $(B_1)_+ + (B_2)_+$ are $(B_1)_+ + (B_2)_-$, $(B_1)_- +
(B_2)_+$, and $(B_1)_- + (B_2)_-$, obtained by
taking $\scriptsize{ \begin{bmatrix} u \\ v \end{bmatrix} }$ equal to
$\scriptsize{ \begin{bmatrix} \phantom{-}0 \\ -1 \end{bmatrix}}$,
$\scriptsize{\begin{bmatrix} -1 \\ \phantom{-}0 \end{bmatrix}}$, and
$\scriptsize{ \begin{bmatrix} -1 \\ -1 \end{bmatrix} }$,
respectively. Consequently, the maximal faces of $\Delta_{C_\circ}$ are 
\begin{align*}
\{ i \mid b_{i1} > 0 \textrm{ or } b_{i2} > 0\} &\supseteq \{1,2,3\}, \qquad
\{ i \mid b_{i1} > 0 \textrm{ or } b_{i2} < 0\} \supseteq \{1,2,4\}, \\
\{ i \mid b_{i1} < 0 \textrm{ or } b_{i2} > 0\} &\supseteq \{1,3,4\}, \qquad
\{ i \mid b_{i1} < 0 \textrm{ or } b_{i2} < 0\} \supseteq \{2,3,4\}.
\end{align*}
We see that $\Delta_{C_\circ}$ has a hollow tetrahedron as a deformation
retract, and hence $\beta_{3,C_\circ}(\bbk[x]/I_{\bbZ B}) =1 $.
\end{proof}

We wish to construct examples of lattice ideals in codimension greater
than $2$ generalizing Theorem~\ref{thm:codim2case}. We do this using
block matrices.

Suppose $G$ is an $n \times m$ matrix of full rank of the form  
\[
G = \left[
\begin{array}{c|c|c|c}
G_1 & 0 & 0 & 0 \\
\hline
0 & G_2 & 0 & 0 \\ 
\hline
0 & 0 & \ddots & 0 \\ 
\hline
0 & 0 & 0 & G_r
\end{array}
\right],
\]

where each $G_i$ is an $n_i \times m_i$-matrix and the columns of $G$
form a basis for the lattice $\bbZ G$.  

Because of the block structure of $G$, $\bbZ G = \bbZ G_1 \oplus \bbZ
G_2 \oplus \cdots \oplus \bbZ G_r$. Any fiber $C$ of $\bbZ^n / \bbZ G$
has the form $C = C_1 \times \cdots \times C_r$, where $C_i$ is a
fiber of
$\bbZ^{n_i}/\bbZ G_i$. In particular, $C$ has coordinatewise positive (resp. nonnegative) elements of the form $(\alpha_1, \ldots, \alpha_r)$, where
$\alpha_i$ is a coordinatewise positive (resp. nonnegative) element of the fiber $C_i
\in \bbZ^{n_i}/ \bbZ G_i$. The resulting complex of supports
$\Delta_C$ is then $\Delta_{C_1} \star \Delta_{C_2} \star \cdots \star
\Delta_{C_r}$, where $\star$ denotes the join of two simplicial
complexes. We recall that the join of two simplicial complexes $\Delta$
and $\Delta'$ is the simplicial complex on the union of their
vertices that has faces $F \cup F'$ where $F$ is a face of $\Delta$
and $F'$ is a face of $\Delta'$.  If $\Delta'$ is the zero-dimensional
simplicial complex with two vertices, then $\Delta \star \Delta'$ is
the \emph{suspension} of $\Delta$, denoted $\Sigma \Delta$. In terms of
reduced homology, it is known that
\begin{equation}
\label{eqn:homologySuspension}
\tilde{H}_{j+1} (\Sigma \Delta; \bbk) \cong  \tilde{H}_j(\Delta;\bbk)
\qquad \textrm{for } j\geq-1.
\end{equation}

Using block matrices, it is easy to construct non-Cohen--Macaulay
lattice ideals of any codimension $m\geq 2$.

\begin{proposition}
\label{prop:nonCMSuspension}
Let $B_\circ \in
\bbZ^{n\times 2}$ such that $G_{B_\circ}$
meets the four open quadrants of $\bbR^2$ and let
and $H = \scriptsize{\begin{bmatrix} \phantom{-}1 \\
    -1 \end{bmatrix} } \in \bbZ^{2\times 1}$.
For $m>2$, let
\[
B = \left[
\begin{array}{c|c|c|c}
B_\circ & 0 & 0 & 0 \\
\hline
0 & H & 0 & 0 \\ 
\hline
0 & 0 & \ddots & 0 \\ 
\hline
0 & 0 & 0 & H
\end{array}
\right] \in \bbZ^{(n+2(m-2))\times m}.
\]
Note that $\bbZ B$ is a positive lattice if and only if $\bbZ B_\circ$
is a positive lattice.
Set $\Gamma = \bbZ^{n+2(m-2)}/ \bbZ B$. If $C$ is the fiber of
$\Gamma$ containing $(B_1)_++(B_2)_++\cdots+(B_m)_+$, then
\[
\beta_{m+1,C}(\bbk[x_1,\dots,x_{n+2(m-2)}]/I_{\bbZ B}) =
1.
\] 
Consequently $I_{\bbZ B}$ is not Cohen--Macaulay.
\end{proposition}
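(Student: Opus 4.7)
The plan is to exploit the block-diagonal structure of $B$ to reduce the homological computation to Proposition~\ref{prop:highSyzygyDegree} via iterated suspension. The excerpt has already observed that for a block decomposition $\bbZ B = \bbZ B_\circ \oplus \bbZ H \oplus \cdots \oplus \bbZ H$, any fiber of $\Gamma$ factors as a product of fibers of the summands, and the corresponding complex of supports is the join of the pieces. I would use this dictionary throughout.

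First I would identify the fiber $C$ explicitly. Writing the columns of $B$ as $B_1, \ldots, B_m$, the sum $(B_1)_+ + \cdots + (B_m)_+$ decomposes as $((B_\circ)_1)_+ + ((B_\circ)_2)_+$ in the first block together with $H_+ = (1,0)^T$ in each of the $m-2$ remaining blocks. Thus $C = C_\circ \times D \times \cdots \times D$, where $C_\circ$ is the fiber of $\bbZ^n/\bbZ B_\circ$ treated in Proposition~\ref{prop:highSyzygyDegree} and $D$ is the fiber of $\bbZ^2/\bbZ H$ containing $(1,0)^T$. Consequently $\Delta_C = \Delta_{C_\circ}\star \Delta_D \star \cdots \star \Delta_D$ with $m-2$ copies of $\Delta_D$.

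Next I would compute $\Delta_D$. The nonnegative elements of $D$ are precisely those $(1,0)^T + k\,H$ with $k \in \bbZ$ that have both coordinates nonnegative, and a direct inspection shows these are just $(1,0)^T$ and $(0,1)^T$. Hence $\Delta_D$ consists of two isolated vertices (the $0$-sphere $S^0$), and joining with $S^0$ is exactly the suspension operation $\Sigma$. Therefore $\Delta_C = \Sigma^{m-2}\Delta_{C_\circ}$.

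To finish, I would iterate the suspension isomorphism~\eqref{eqn:homologySuspension}. Proposition~\ref{prop:highSyzygyDegree} gives $\tilde{H}_2(\Delta_{C_\circ};\bbk)\cong \bbk$, so iterated suspension yields
\[
\tilde{H}_m(\Delta_C;\bbk) \;\cong\; \tilde{H}_2(\Delta_{C_\circ};\bbk) \;\cong\; \bbk.
\]
Applying Lemma~\ref{pslemma} with $j = m$ gives $\beta_{m+1,C}(\bbk[x]/I_{\bbZ B}) = 1$. Since $I_{\bbZ B}$ has codimension $m$, the existence of a nonzero Betti number in homological degree $m+1$ contradicts the Cohen--Macaulay criterion recalled after Lemma~\ref{pslemma}, so $I_{\bbZ B}$ is not Cohen--Macaulay. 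The only step that requires real care is the explicit enumeration of nonnegative lifts in the fiber $D$ (to ensure $\Delta_D = S^0$ rather than a larger complex); everything else is bookkeeping once the join/suspension interpretation of the product structure on fibers is in hand.
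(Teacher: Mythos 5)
Your proposal is correct and follows essentially the same route as the paper: factor the fiber as $C_\circ \times C' \times \cdots \times C'$, identify $\Delta_{C'}$ as two isolated vertices so that the join is an iterated suspension, and combine Proposition~\ref{prop:highSyzygyDegree} with~\eqref{eqn:homologySuspension} and Lemma~\ref{pslemma}. Your homological indexing ($\tilde{H}_2(\Delta_{C_\circ})\cong\bbk$, hence $\tilde{H}_m(\Delta_C)\cong\bbk$ and $\beta_{m+1,C}=1$) is in fact the careful version of the bookkeeping, noting only that the exact value $1$ (rather than mere nonvanishing) comes from the proof of Proposition~\ref{prop:highSyzygyDegree}, where $\Delta_{C_\circ}$ retracts onto a hollow tetrahedron.
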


\begin{proof}
Let $C_\circ$ be the fiber in $\bbZ^n/\bbZ B_\circ$ containing
$((B_\circ)_1)_+ + ((B_\circ)_2)_+$, and $\Delta_\circ =
\Delta_{C_\circ}$. By (the proof of)
Proposition~\ref{prop:highSyzygyDegree},
$\tilde{H}_3(\Delta_\circ;\bbk)$ has rank $1$.

Let $C'$ be the fiber in $\bbZ^2/ \bbZ H$ containing $(1,0)$. We see that $C'=\{(1,0), (0,1)\}$,
and therefore $\Delta':=\Delta_{C'}$ is the zero dimensional
simplicial complex with two vertices. 

Then $\Delta_C = \Delta_\circ \star \Delta' \star \cdots \star
\Delta'$ is a sequence of suspensions of
$\Delta_\circ$. Repeatedly applying~\eqref{eqn:homologySuspension},
and using that $\tilde{H}_3(\Delta_\circ;\bbk)$ has rank $1$, we see
that $\tilde{H}_{m+1}(\Delta_C;\bbk)$ has rank $1$, and therefore 
\[
\beta_{m+1,C}(\bbk[x_1,\dots,x_{n+2(m-2)}]/I_{\bbZ B}) = 1.
\]
\end{proof}

The property of being a complete intersection also behaves well
with respect to block matrices for lattice ideals. This is stated precisely as follows.

\begin{lemma}
\label{lemma:blockCI}
Let $B^{(1)} \in \bbZ^{n_1\times m_1}$ and $B^{(2)} \in \bbZ^{n_2\times m_2}$
have rank $m_1$ and $m_2$ respectively. Let 
$B = \left[ \scriptsize{\begin{array}{c|c}
B^{(1)} & 0 \\
\hline
0 & B^{(2)} 
\end{array}} \right] \in \bbZ^{(n_1+n_2)\times(m_1+m_2)}
$. 
The binomials corresponding to the columns of $B$ generate $I_{\bbZ
  B}$ if and only if the binomials corresponding to the columns of
$B^{(i)}$ generate $I_{\bbZ B^{(i)}}$ for $i=1,2$. 
\end{lemma}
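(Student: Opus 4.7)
The strategy is to exploit the direct-sum decomposition $\bbZ B = \bbZ B^{(1)} \oplus \bbZ B^{(2)}$ coming from the block form of $B$. Partition the variables as $y = (x_1,\dots,x_{n_1})$ and $z = (x_{n_1+1},\dots,x_{n_1+n_2})$, so that $\bbk[x] = \bbk[y,z]$. Any $u \in \bbZ B$ decomposes uniquely as $u = (u^{(1)}, u^{(2)})$ with $u^{(i)} \in \bbZ B^{(i)}$, and because the supports are disjoint the positive and negative parts respect the block structure: $u_\pm = (u^{(1)}_\pm, u^{(2)}_\pm)$. The column binomials of $B$ are exactly the column binomials of $B^{(1)}$ (involving only $y$-variables) together with the column binomials of $B^{(2)}$ (involving only $z$-variables).

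The pivotal observation is the telescoping identity
\[
y^{u^{(1)}_+} z^{u^{(2)}_+} - y^{u^{(1)}_-} z^{u^{(2)}_-}
= y^{u^{(1)}_+}\bigl(z^{u^{(2)}_+} - z^{u^{(2)}_-}\bigr) + z^{u^{(2)}_-}\bigl(y^{u^{(1)}_+} - y^{u^{(1)}_-}\bigr),
\]
which I plan to establish first. Since the right-hand summands lie in the extensions of $I_{\bbZ B^{(2)}}$ and $I_{\bbZ B^{(1)}}$ to $\bbk[x]$ respectively, every binomial generator of $I_{\bbZ B}$ lies in $I_{\bbZ B^{(1)}}\bbk[x] + I_{\bbZ B^{(2)}}\bbk[x]$. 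The reverse containment is immediate from embedding $\bbZ B^{(i)}$ into $\bbZ B$ by zero-padding, giving the identity $I_{\bbZ B} = I_{\bbZ B^{(1)}}\bbk[x] + I_{\bbZ B^{(2)}}\bbk[x]$.

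For the ($\Leftarrow$) direction, if each $I_{\bbZ B^{(i)}}$ is generated by its column binomials, the above identity shows that the union of these two generating sets generates $I_{\bbZ B}$, and this union is precisely the set of column binomials of $B$. For the ($\Rightarrow$) direction, fix $i = 1$ (the other case is symmetric). For $v \in \bbZ B^{(1)}$, the binomial $y^{v_+} - y^{v_-}$ lies in $I_{\bbZ B}$ via the zero-padded embedding, so by hypothesis it can be written as a $\bbk[x]$-combination of column binomials of $B$. I then apply the $\bbk[y]$-algebra homomorphism $\bbk[x] \to \bbk[y]$ sending each $z_j \mapsto 1$: column binomials of $B^{(2)}$ have the form $z^{w_+} - z^{w_-}$, which map to $1 - 1 = 0$, while column binomials of $B^{(1)}$ are fixed. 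The resulting expression realizes $y^{v_+} - y^{v_-}$ as a $\bbk[y]$-combination of column binomials of $B^{(1)}$.

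The argument is almost entirely formal, so no step is a genuine obstacle. The only point requiring minor care is the substitution step in the ($\Rightarrow$) direction, but the fact that $z_j \mapsto 1$ annihilates every column binomial of $B^{(2)}$ is automatic because both monomials of such a binomial map to $1$, independently of any mixedness or positivity hypothesis on $B^{(2)}$.
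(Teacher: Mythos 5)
Your proof is correct and takes essentially the same route as the paper: the same splitting of the variables according to the blocks, the same telescoping identity, and the resulting equality of $I_{\bbZ B}$ with the sum of the extensions of $I_{\bbZ B^{(1)}}$ and $I_{\bbZ B^{(2)}}$. The only (minor) difference is at the end: the paper concludes by invoking the contractions $I_{\bbZ B^{(i)}} = I_{\bbZ B} \cap \bbk[\text{$i$th block of variables}]$, whereas you make the forward implication explicit with the evaluation sending the other block's variables to $1$, which annihilates that block's column binomials — a slightly more explicit rendering of the same argument.
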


\begin{proof}
For ease in the notation, we consider 
\[
I_{\bbZ B^{(1)}} \subset \bbk[x_1,\dots,x_{n_1}] = \bbk[x] \textrm{ and } 
I_{\bbZ B^{(2)}} \subset \bbk[y_1,\dots,y_{n_2}]=\bbk[y] 
\]
so that $I_{\bbZ B} \subset
\bbk[x_1,\dots,x_{n_1},y_1,\dots,y_{n_2}]=k[x,y]$. We also consider
$\bbk[x], \bbk[y] \subset \bbk[x,y]$.

The lattice ideal $I_{\bbZ B}$ is generated by binomials
$x^{u_+}y^{v_+}-x^{u_-}y^{v_-}$ for $u \in \bbZ B^{(1)} \subset
\bbZ^{n_1}$ and $v \in \bbZ B^{(2)} \subset \bbZ^{n_2}$. Note that
$x^{u_+}y^{v_+}-x^{u_-}y^{v_-} = x^{u_+}(y^{v_+}-y^{v_-}) +
y^{v_-}(x^{u_+}-x^{u_-})$. This implies that $I_{\bbZ B} = \bbk[x,y]
\cdot I_{\bbZ B^{(1)}}+ \bbk[x,y] \cdot I_{\bbZ B^{(2)}}$. Moreover, 
$I_{\bbZ B^{(1)}} = I_{\bbZ B} \cap \bbk[x]$ and $I_{\bbZ B^{(2)}} =
I_{\bbZ B} \cap \bbk[y]$. Our statement follows from these relationships.
\end{proof}

We now come to the main result in this article.

\begin{theorem}
\label{thm:generalCase}
Let $m \geq 2$ be an integer. 
There are infinitely many examples of rank $m$ non-saturated lattices
$\mathscr{L}$ such that one of $I_{\mathscr{L}}$,
$I_{\mathscr{L}^{\textrm{sat}}}$ is a complete intersection, and the
other one is not Cohen--Macaulay.
\end{theorem}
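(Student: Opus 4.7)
The plan is a block-matrix bootstrap of Theorem~\ref{thm:codim2case} using Proposition~\ref{prop:nonCMSuspension} and Lemma~\ref{lemma:blockCI}. Fix $m \geq 3$; the case $m=2$ is Theorem~\ref{thm:codim2case} itself. All examples will come from an $m$-column block-diagonal matrix whose top-left block carries the codimension-$2$ behavior (in one of the two flavors of Theorem~\ref{thm:codim2case}) and whose remaining $m-2$ blocks are copies of $H = \left[\begin{smallmatrix} \phantom{-}1\\-1\end{smallmatrix}\right]$. Lemma~\ref{lemma:blockCI} controls the complete intersection property on the nose, and Proposition~\ref{prop:nonCMSuspension} supplies the non-Cohen--Macaulay witness by iteratively suspending the codimension-$2$ hollow tetrahedron of Proposition~\ref{prop:highSyzygyDegree}.

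For the direction producing $I_{\mathscr{L}}$ a complete intersection and $I_{\mathscr{L}^{\textrm{sat}}}$ not Cohen--Macaulay, I would start with any $4 \times 2$ integer matrix $A$ whose columns span a saturated lattice and whose Gale diagram meets all four open quadrants, chosen so that the matrix $M$ supplied by Proposition~\ref{prop:4x2nonCMtoCM} satisfies $|\det M| \geq 2$ (infinitely many such $A$ exist, since the auxiliary vector $v$ in the proof of Proposition~\ref{prop:4x2nonCMtoCM} may be scaled, and $A$ itself can be varied). Let $B'$ be the block-diagonal matrix with $AM$ in the upper-left and $m-2$ copies of $H$ along the diagonal, and let $B$ be the same with $AM$ replaced by $A$. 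Set $\mathscr{L} = \bbZ B'$. Since $\bbZ A$ and each $\bbZ H$ is saturated and saturation distributes over direct sums of lattices, $\bbZ B$ is saturated; the index $[\bbZ B : \bbZ B']$ equals $|\det M| \geq 2$, so $\mathscr{L}^{\textrm{sat}} = \bbZ B$ and $\mathscr{L}$ is genuinely non-saturated. By iterated application of Lemma~\ref{lemma:blockCI}, $I_{\bbZ B'}$ is generated by the column binomials of $B'$; since the column binomials of $AM$ generate $I_{\bbZ AM}$ by Theorem~\ref{thm:codim2CM}(1) and each $H$ contributes a single binomial, $I_{\mathscr{L}}$ is a complete intersection. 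On the other hand, Proposition~\ref{prop:nonCMSuspension} applied with $B_\circ = A$ shows $\beta_{m+1,C}(\bbk[x]/I_{\bbZ B}) = 1$, so $I_{\mathscr{L}^{\textrm{sat}}}$ is not Cohen--Macaulay.

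For the opposite direction, I would instead start with $A \in \bbZ^{n \times 2}$ spanning a saturated lattice, whose Gale diagram is imbalanced and spans more than three rays, and with $M$ from Proposition~\ref{CItononCMcodim2} arranged to satisfy $|\det M| \geq 2$ (again using the freedom in the choices of $s$ and $t$ inside open cones). Performing the same block construction, $I_{\bbZ B}$ becomes a complete intersection by Lemma~\ref{lemma:blockCI} (both $I_{\bbZ A}$ and $I_{\bbZ H}$ are complete intersections), while Proposition~\ref{prop:nonCMSuspension} applied with $B_\circ = AM$ (which does meet all four open quadrants) shows $I_{\bbZ B'} = I_{\mathscr{L}}$ is not Cohen--Macaulay. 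Infinitely many non-isomorphic $\mathscr{L}$ in either direction arise from the infinite supply of admissible $A$.

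The only real obstacle is a bundle of small bookkeeping checks: that saturation distributes across block direct sums of lattices, that $\bbZ H$ is itself saturated and positive, that positivity of $\bbZ B$ reduces to positivity of the codimension-$2$ block, and that the transforms $M$ in Propositions~\ref{prop:4x2nonCMtoCM} and~\ref{CItononCMcodim2} can be arranged with $|\det M| \geq 2$ so that $\mathscr{L}$ is genuinely distinct from $\mathscr{L}^{\textrm{sat}}$. None is substantive, but together they are what glues the codimension-$2$ statements onto the higher-codimension conclusion.
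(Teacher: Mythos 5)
Your proposal is correct and takes essentially the same route as the paper's proof: a block-diagonal matrix whose top-left block is the codimension-$2$ example (transformed via Proposition~\ref{prop:4x2nonCMtoCM} or Proposition~\ref{CItononCMcodim2}) and whose remaining blocks are copies of $H$, with Lemma~\ref{lemma:blockCI} supplying the complete intersection and Proposition~\ref{prop:nonCMSuspension} supplying the non-Cohen--Macaulay witness. Your extra bookkeeping about arranging $|\det M|\geq 2$ is in fact automatic: if $\det M=\pm 1$ then $\bbZ A=\bbZ AM$, which would make one and the same ideal both a complete intersection and non-Cohen--Macaulay, a contradiction, so the lattices $\mathscr{L}$ produced are necessarily non-saturated.
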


\begin{proof}
When $m=2$, this is Theorem~\ref{thm:codim2case}. Let $B_\circ \in
\bbZ^{n\times 2}$ and $H = \scriptsize{\begin{bmatrix} \phantom{-}1 \\
    -1 \end{bmatrix} } \in \bbZ^{2\times 1}$. Suppose now $m>2$ and consider the matrix 
\[
B = \left[
\begin{array}{c|c|c|c}
B_\circ & 0 & 0 & 0 \\
\hline
0 & H & 0 & 0 \\ 
\hline
0 & 0 & \ddots & 0 \\ 
\hline
0 & 0 & 0 & H
\end{array}
\right] \in \bbZ^{(n+2(m-2))\times m},
\]

Let $B_\circ \in \bbZ^{4\times 2}$ whose Gale diagram meets the four
open quadrants of $\bbR^2$, and whose columns span a saturated lattice
(there are infinitely many such matrices). Let $M_\circ$ be as in
Proposition~\ref{prop:4x2nonCMtoCM}, and consider the nonsingular matrix
\begin{equation}
\label{eqn:blockM}
M = \left[
\begin{array}{c|c|c|c}
M_\circ & 0 & 0 & 0 \\
\hline
0 & 1 & 0 &  0 \\
\hline
0 & 0 & \ddots & 0 \\
\hline 
0 & 0 & 0 & 1   
\end{array}
\right] \in \bbZ^{m\times m}.
\end{equation}
Then we have
\[
BM =  \left[
\begin{array}{c|c|c|c}
B_\circ M_\circ & 0 & 0 & 0 \\
\hline
0 & H & 0 &  0 \\
\hline
0 & 0 & \ddots & 0 \\
\hline 
0 & 0 & 0 & H   
\end{array}
\right]\in \bbZ^{(n+2(m-2))\times m}.
\]
If $\mathscr{L}$ is the lattice spanned by the columns of $BM$, then
$\mathscr{L}^{\textrm{sat}}$ is the lattice spanned by the columns of
$B$. By Lemma~\ref{lemma:blockCI}, $I_{\mathscr{L}}$ is a complete
intersection, and by Proposition~\ref{prop:nonCMSuspension},
$I_{\mathscr{L}^{\textrm{sat}}}$ is not Cohen--Macaulay.

Now let $B_\circ \in \bbZ^{n \times 2}$ whose columns span a saturated
  lattice, and whose Gale diagram is imbalanced and spans at least
  four rays (there are infinitely many such matrices). Let $M_\circ$
  be as in Proposition~\ref{CItononCMcodim2}, and construct $M$
  using~\eqref{eqn:blockM}. 
If $\mathscr{L}$ is the lattice spanned by the columns of $BM$, then
$\mathscr{L}^{\textrm{sat}}$ is the lattice spanned by the columns of
$B$. By Proposition~\ref{prop:nonCMSuspension} $I_{\mathscr{L}}$ is
not Cohen--Macaulay, and by 
Lemma~\ref{lemma:blockCI} $I_{\mathscr{L}^{\textrm{sat}}}$ is a
complete intersection.
\end{proof}

\bibliographystyle{plain}
\bibliography{biblio.bib} 

\end{document}